\theoremstyle{theorem}
\newtheorem{theorem}{Theorem}
\newtheorem{proposition}{Proposition}
\theoremstyle{definition}
\journal{Discrete Mathematics}
\begin{document}

\begin{frontmatter}



\title{The smallest art gallery not guarded by every third vertex}


\author{Ralph Morrison}



\begin{abstract}
A polygonal art gallery can be observed by guards placed at one third of its corners.  However, the strategy of placing guards at every third corner does not work for all art galleries.  In this note, we provide an example of a nine-sided art gallery for which this strategy fails, and prove that this example is minimal.

\end{abstract}





\end{frontmatter}


\section{Introduction}
\label{sec:introduction}
Given an art gallery shaped like a polygon with $n$ sides, how many guards are needed to observe the whole gallery? Here we assume that each guard occupies a point $p$ of the polygon, and that any point $q$ in the polygon is visible to $p$ if and only if the line segment $\overline{pq}$ is contained in the polygon. This question was  posed by Victor Klee in 1973, and answered by  V\'{a}clav Chv\'{a}tal \cite{chvatal75}:   letting $\lfloor x\rfloor$ denotes the largest integer less than or equal to $x$, he showed that $\lfloor n/3\rfloor$ guards suffice, and are needed for some art galleries. For example, in a comb-shaped gallery like the one in Figure \ref{figure:comb}, each ``tooth'' requires one guard and contributes three vertices.

\begin{figure}[hbt]
\begin{centering}
\includegraphics{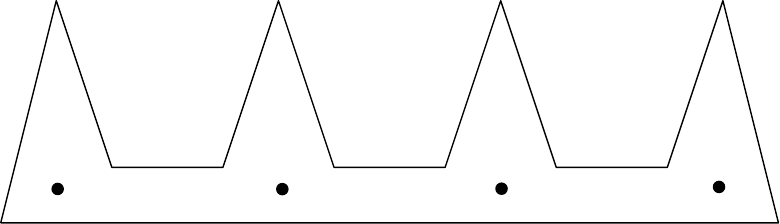}
\caption{An art gallery shaped like a comb requires $\lfloor n/3\rfloor$ guards}
\label{figure:comb}
\end{centering}
\end{figure}

 Steve Fisk offered a simplified proof in  \cite{fisk1978}, using the following $3$-coloring argument:  triangulate the polygon, and color the vertices of the polygon with three colors so that each triangle has one vertex of each color.  Choose the color that appears least often, and place a guard at each vertex of that color.  This uses at most $\lfloor n/3\rfloor$ guards.  Each triangle will have a guard on one of its vertices, and since the guard can observe the whole triangle, every triangle, and thus the entire polygon, is covered.

\begin{figure}[hbt]\begin{centering}
\includegraphics[scale=0.55]{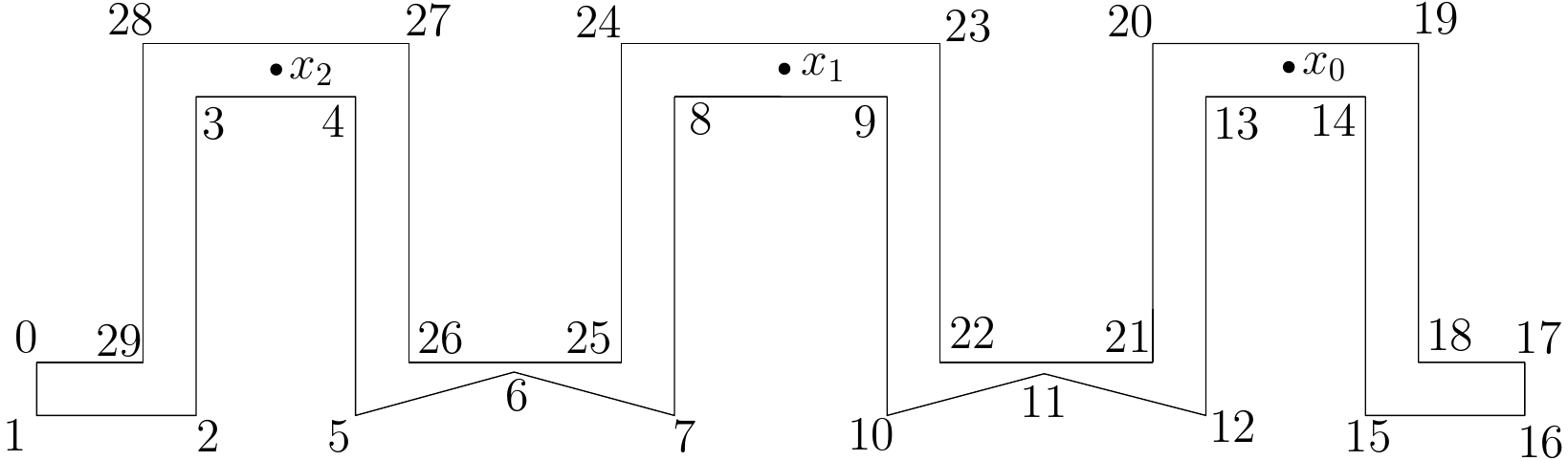}
\caption{A 30-gon that is not guarded by every third vertex}
\label{figure:30gon}
\end{centering}
\end{figure}

When $n$ is a multiple of $3$, one might be tempted to place a guard at every third vertex, for some choice of starting vertex.  However, this method of placing guards does not always work:  Joseph O'Rourke \cite[\S 1.2]{orourke1987} presents a counterexample with $n=30$ sides, reproduced in Figure~\ref{figure:30gon}.  Regardless of the starting vertex, placing a guard at every third vertex will fail to cover one of $x_0$, $x_1$, and $x_2$.

\begin{figure}[hbt]\begin{centering}
\includegraphics[scale=0.45]{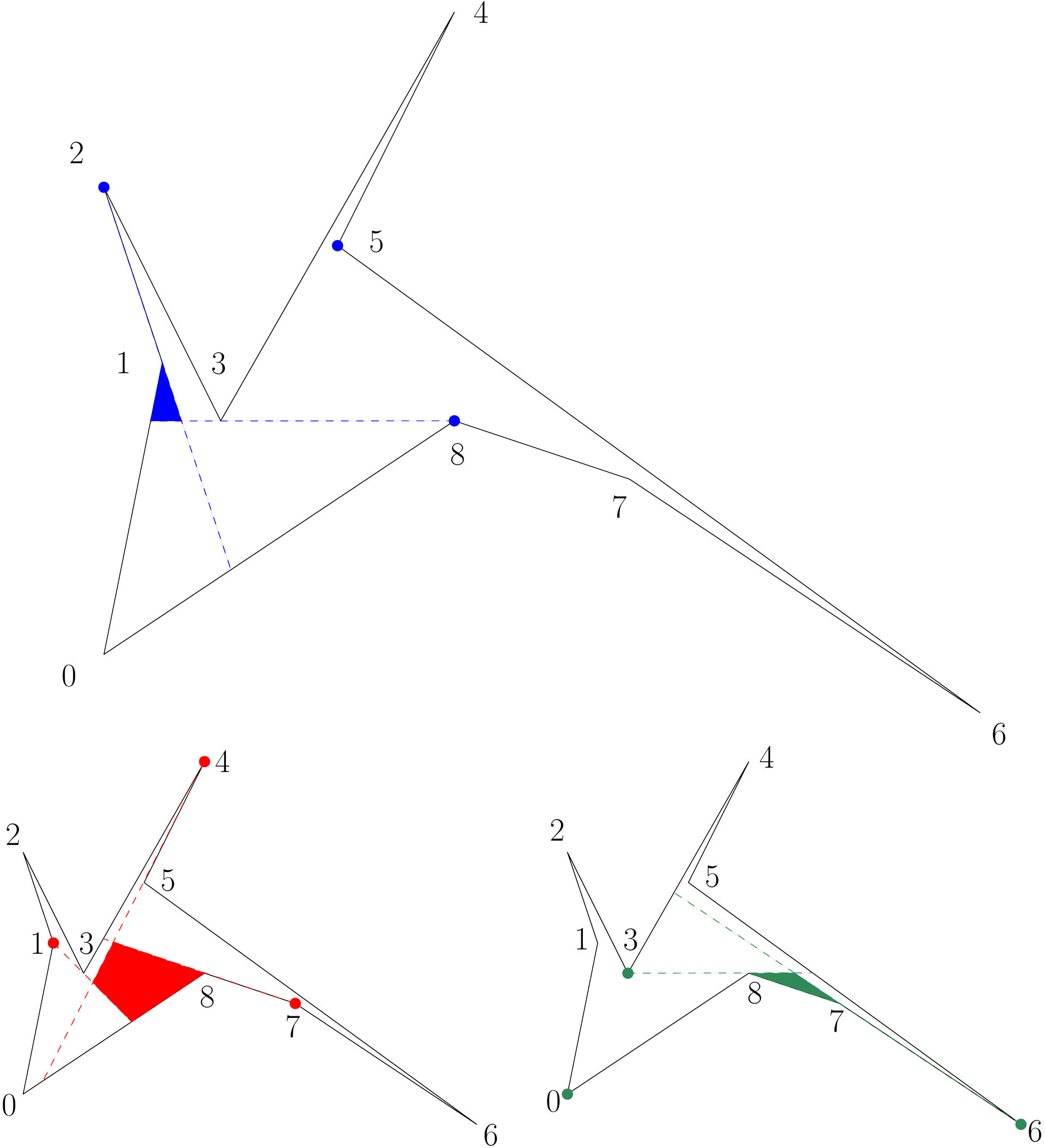}
\caption{A nonagon not guarded by every third vertex, with the blindspots shaded}
\label{figure:nonagon}
\end{centering}
\end{figure}

We present a smaller polygon that is not guarded by every third vertex, no matter which starting vertex we choose: the nonagon illustrated in Figure \ref{figure:nonagon}.  The vertices of this polygon have coordinates $(0,0)$, $(1,5)$, $(0,8)$, $(2,4)$, $(6,11)$, $(4,7)$, $(15,-1)$, $(9,3)$, and $(6,4)$.  If we label the vertices cyclically $0$ through $8$, then placing guards at every third vertex starting from $0$, from $1$, or from $2$ will fail to cover the whole polygon, as illustrated.

In Section \ref{sec:construction} we outline the method that constructed the nonagon.  In Section \ref{sec:allpolygons}, we argue that any polygon with fewer than nine sides is successfully covered by placing guards at every third vertex, meaning that this nonagon gives us the smallest art gallery not guarded by every third vertex.  In Section 4 we discuss related open problems.

\section{Constructing the nonagon}
\label{sec:construction}

In this section we provide the heuristic argument that lead to the construction of our nonagon.  

Suppose that $P$ is a nonagon that is not guarded by every third vertex.  Color the vertices of the polygon cyclically using three colors (red, blue, and green, going clockwise), and then triangulate the polygon.  By our assumption, placing guards at the red vertices does not cover the whole polygon, meaning there must be a triangle $T$ with only blue and green vertices; otherwise, the red vertices would guard each triangle, and thus the whole polygon.  (In our example, this triangle could have vertices $3$, $5$, and $8$.)

Suppose all the vertices  of $T$ are the same color, say blue.  Then between each two vertices of $T$ there would have to be two vertices of $P$, one green and one red. This means $P$ would be the union of the triangle $T$ and four quadrilaterals, each of which has two adjacent blue vertices.  But any two adjacent vertices of a quadrilateral guard the whole quadrilateral, meaning that the blue vertices must guard all of $P$, a contradiction.

Thus, it must be that $T$ has a mix of blue and green vertices. Without loss of generality, we may assume it has two blue and one green. We will now further assume that $T$ has all its edges in the interior of the polygon, meaning that none of its three vertices are joined by an edge of $P$.  
Labelling the vertices of $T$ clockwise as $G$, $B_1$, and $B_2$, the only  possible color sequence of the vertices of $T$ (starting at $G$) is:
$$GrB_1grB_2grb.$$
This is because the other two possible sequences, namely
$$GrB_1grbgrB_2$$
and
$$GrbgrB_1grB_2,$$
would make $T$ have a boundary edge, connected $B_2$ and $G$.

Thus $P$ will consist of the triangle $T$ glued to three other polygons:  a triangle colored $GrB_1$, a quadrilateral colored $B_1grB_2$, and a pentagon colored $B_2grbG$.  The new triangle can't contain any blindspots regardless of which color we choose, since it is convex and has a vertex of each color.  The quadrilateral can't contain a blue blindspot since it has two adjacent blue vertices, but it could contain a green blindspot.  Finally, the pentagon is the only portion of $P$ that could contain a blue blindspot.  Guided by these constraints, we can construct  the nonagon in Figure~\ref{figure:nonagon}.

\section{When $3$ doesn't divide $n$}
\label{sec:allpolygons}

\begin{figure}[hbt]\begin{centering}
\includegraphics{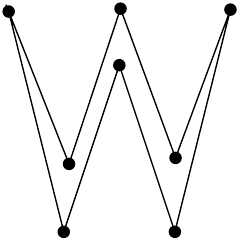}
\caption{An octagon not guarded by $2$ vertices placed three apart}
\label{figure:bad_octagon}
\end{centering}
\end{figure}

There are two reasonable interpretations of ``every third vertex'' if $n$ is \emph{not} a multiple of $3$.  Both versions place a guard at a vertex and, traveling either clockwise or counterclockwise, place a new guard at every third vertex.  However, one might choose to stop upon reaching the upper bound of $\lfloor n/3\rfloor$ guards; or one might stop just before passing the original guard, placing the final guard less than $3$ edges away from the first.  Under the former interpretation, there exists a polygon with $8$ sides where this strategy always fails to guard the polygon, as illustrated in Figure \ref{figure:bad_octagon}.  However, there does not exist such a polygon with $7$ sides, as shown in the following Proposition.

\begin{proposition} All heptagons are guarded by the ``every third vertex'' strategy with $2$ guards.
\label{prop:heptagons}
\end{proposition}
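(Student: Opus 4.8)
The plan is to reduce the statement to a purely combinatorial fact about triangulations and then settle it by a short case analysis built around one well‑chosen diagonal.

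First I would triangulate the heptagon $P$ by any triangulation $\mathcal T$. Each triangle of $\mathcal T$ is convex and contained in $P$, so a guard at a vertex $v$ sees every triangle of $\mathcal T$ incident to $v$; since the triangles cover $P$, it suffices to produce two vertices $v_i,v_{i+3}$ (indices mod $7$) such that every triangle of $\mathcal T$ has $v_i$ or $v_{i+3}$ as a corner — then the guards at $v_i$ and $v_{i+3}$ see all of $P$. To find such a pair, observe that each of the four diagonals of $\mathcal T$ either cuts off an ear (splitting $P$ into a triangle and a hexagon) or splits $P$ into a quadrilateral and a pentagon. Distinct ear‑cutting diagonals give distinct ears, and since ear apexes are pairwise nonadjacent a heptagon has at most $\lfloor 7/2\rfloor=3$ ears; hence some diagonal $e$ of $\mathcal T$ gives a quadrilateral–pentagon split. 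After relabeling the vertices by a rotation and, if needed, a reflection, I may assume $e=v_0v_3$, so that $\mathcal T$ restricts to triangulations of the quadrilateral $Q=v_0v_1v_2v_3$ (two triangles) and the pentagon $R=v_0v_3v_4v_5v_6$ (three triangles), and $v_0,v_3$ are at cyclic distance $3$.

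Now the case analysis. The quadrilateral $Q$ is split along $v_0v_2$ or along $v_1v_3$, so one of $v_0,v_3$ — call it $v_q$ — is a corner of both triangles of $Q$. The pentagon $R$, being triangulated, is a fan from one of $v_0,v_3,v_4,v_5,v_6$. If the fan apex is $v_0$, $v_3$, or $v_5$, a direct check shows every triangle of $R$ — for the $v_5$‑fan these are $v_0v_5v_6,\ v_0v_3v_5,\ v_3v_4v_5$ — contains $v_0$ or $v_3$, so $\{v_0,v_3\}$ already has a corner in all five triangles of $\mathcal T$. If the apex is $v_4$, the triangles of $R$ are $v_4v_5v_6,\ v_0v_4v_6,\ v_0v_3v_4$, all containing $v_4$; then $\{v_0,v_4\}$ works when $v_q=v_0$, and when $v_q=v_3$ (so $Q$ is split along $v_1v_3$) the pair $\{v_1,v_4\}$ has a corner in each of $v_0v_1v_3,\ v_1v_2v_3,\ v_4v_5v_6,\ v_0v_4v_6,\ v_0v_3v_4$. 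The apex $v_6$ is the mirror image of the apex $v_4$ case (swap $v_0\leftrightarrow v_3$, $v_1\leftrightarrow v_2$, $v_4\leftrightarrow v_6$), handled by $\{v_3,v_6\}$ or $\{v_2,v_6\}$. In every case the chosen pair is at cyclic distance $3$ and meets all five triangles, so its two guards cover $P$.

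I expect the only real work to be organizing these finitely many configurations cleanly, together with the elementary supporting facts used above (a diagonal cuts off a triangle or a quadrilateral‑plus‑pentagon; a heptagon has at most three ears; a triangulated pentagon is a fan; in a quadrilateral one endpoint of the chosen diagonal lies in both triangles). The one subtlety worth flagging is that when $R$ is fanned from $v_4$ or $v_6$ the "obvious" pair (the apex together with $v_q$) can collapse to cyclic distance $1$; the fix is that a suitable neighbor of $v_q$ then serves as the partner, and one must verify this replacement still meets every triangle. There should be no genuinely hard step.
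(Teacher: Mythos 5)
Your argument is correct, and all five guard pairs you exhibit are indeed at cyclic distance $3$, so each is realizable by the ``every third vertex'' strategy; the supporting facts you lean on (every diagonal of a heptagon gives either a triangle--hexagon or a quadrilateral--pentagon split, ear apexes of a triangulation are pairwise nonadjacent so a heptagon triangulation has at most three ears, a triangulated pentagon is a fan, one endpoint of a quadrilateral's diagonal lies in both of its triangles) are all true and elementary. The paper reaches the same reduction --- it suffices to find two vertices three apart meeting every triangle of a triangulation --- but organizes the case analysis around a different pivot: it counts edges ($11$, hence degree sum $22$) and uses the Pigeonhole Principle to find a vertex $v$ of degree at least $4$, then enumerates in a figure the possible triangulations as seen from $v$ and checks each one. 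Your pivot is instead a distance-$3$ diagonal, whose existence you get by counting ears, and the enumeration is then driven by the fan apex of the resulting pentagon together with the choice of diagonal in the quadrilateral. The trade-off is that the paper's version is shorter on paper but leaves the exhaustiveness of its picture-based enumeration implicit, whereas yours replaces the figure with explicit structural lemmas and an enumeration whose completeness is transparent; you also correctly flag and handle the one delicate point (when the fan apex is adjacent to the shared endpoint, the naive pair degenerates to cyclic distance $1$ and must be replaced by a neighbor), which is exactly where a careless version of either argument would break.
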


\begin{proof}

\begin{figure}[hbt]\begin{centering}
\includegraphics[scale=0.6]{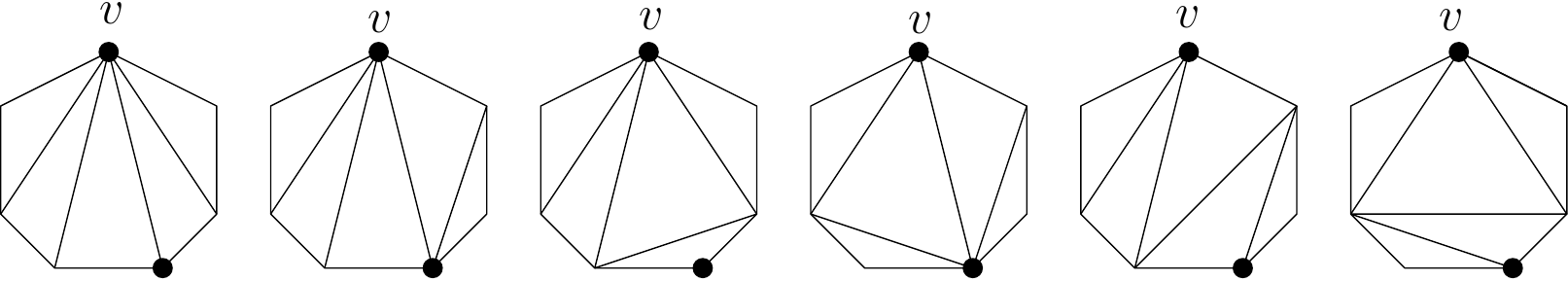}
\caption{Possible triangulations of a heptagon, with guarding vertices marked}
\label{figure:heptagons}
\end{centering}
\end{figure}

Let $P$ be a heptagon.  Triangulate it, resulting in five triangles and four diagonals.  Treating this triangulation as a graph, there are a total of $11$ edges, so the sum of the degrees of the vertices is $22$.  By the Pigeonhole Principle, some vertex $v$ must be connected to at least $\lceil 22/7\rceil=4$ other vertices.  So we have $\deg(v)=6$, $\deg(v)=5$, or $\deg(v)=4$. Figure \ref{figure:heptagons} illustrates the different possible triangulations of $P$ from the perspective of $v$, up to reflection.  In all cases, $v$ together with a vertex three edges away guards every triangle, and thus the entirety of $P$.

\end{proof}

We now choose the more generous interpretation:  that we place guards at every third vertex, stopping just before we pass the initial guard.  In this case we have the following result.

\begin{theorem}  Let $P$ be an $n$-gon, where $n\leq 8$.  Then for some choice of starting vertex, placing a guard at every third vertex of $P$ covers all of $P$.
\label{theorem:main}
\end{theorem}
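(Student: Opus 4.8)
The plan is to dispatch $n\le 6$ and $n=7$ quickly, using the $3$-colouring argument of Fisk together with Proposition~\ref{prop:heptagons}, and then to attack the real case $n=8$ by a case analysis on triangulations in the style of the proof of Proposition~\ref{prop:heptagons}. Two facts are used throughout. First, for any triangulation of $P$ and any proper $3$-colouring of it --- one vertex of each colour in each triangle --- \emph{every} colour class, not merely the least frequent, is a set of guards covering $P$, since each triangle is convex and has a vertex of every colour. Second, under the generous interpretation the strategy places, for a chosen starting vertex $v$, the guard set $\{v, v+3, v+6, \dots\}$ with indices read cyclically: a single vertex when $n=3$, a pair $\{v, v+3\}$ when $n\in\{4,5,6\}$, and the triple $\{v, v+3, v+6\}$ when $n\in\{7,8\}$. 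So it suffices to produce, in each case, a covering set of this prescribed shape.

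For $n\le 5$ we have $\lfloor n/3\rfloor\le 1$, so a proper $3$-colouring of a triangulation has a colour occurring exactly once, at a vertex $w$; starting the strategy at $w$ yields a guard set containing $w$, and $\{w\}$ already covers $P$. For $n=6$: triangulate and $3$-colour. If some colour occurs at most once, start there. Otherwise every colour occurs exactly twice, and since adjacent vertices of $P$ receive different colours (an edge of $P$ lies in some triangle, which cannot repeat a colour), the colour classes partition the cyclic vertex set into three pairs of pairwise non-adjacent vertices. A short combinatorial check shows that such a partition of a $6$-cycle must contain a pair of antipodal vertices $\{v, v+3\}$: if not, all three pairs have cyclic distance $2$, but fixing any one such pair leaves four vertices that cannot be split into two distance-$2$ pairs. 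Starting at that $v$, the guard set $\{v, v+3\}$ is exactly a colour class, so it covers $P$. For $n=7$, Proposition~\ref{prop:heptagons} gives a covering pair $\{v,w\}$ with $w$ exactly three edges from $v$; taking the starting vertex to be $v$ (if $w=v+3$) or $v-3$ (if $w=v-3$) produces a guard triple $\{v',v'+3,v'+6\}$ containing $\{v,w\}$, which therefore covers $P$.

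It remains to treat $n=8$. A triangulation of $P$ has $6$ triangles and $5$ diagonals, hence $13$ edges and degree sum $26$, so some vertex $v$ has degree $d\ge\lceil 26/8\rceil = 4$ in the triangulation graph. The guard sets available for an octagon are the eight triples $\{v, v+3, v+6\}$, whose cyclic gaps are $(3,3,2)$; the vertices that can accompany $v$ in such a triple are exactly $v\pm2$ and $v\pm3$. List the triangulation-neighbours of $v$ cyclically as $v+1=n_0, n_1, \dots, n_{d-1}=v-1$. Then $v$ sees the whole \emph{fan polygon} $Q = v\,n_0 n_1\cdots n_{d-1}$, and $P$ is $Q$ together with the pockets hanging off the diagonals $n_i n_{i+1}$, which among them contain the $7-d$ vertices of $P$ not joined to $v$. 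A pocket with $m$ hidden vertices is an $(m+2)$-gon, covered by its fan apex if $m=3$, by either endpoint of its diagonal if $m=2$, and by any one vertex if $m=1$. If $d=7$ then $v$ alone covers $P$, and if $d\in\{5,6\}$ there are at most two pockets, each a triangle with possibly one a quadrilateral; a short check shows that a suitable valid triple through $v$ then covers $P$.

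The main case, and the main obstacle, is $d=4$: here $Q=v\,n_0 n_1 n_2 n_3$ is a pentagon covered by $v$, and the three pockets off $n_0 n_1, n_1 n_2, n_2 n_3$ hold three vertices in total, distributed --- up to the reflection fixing $v$ --- as one of $(3,0,0)$, $(0,3,0)$, $(2,1,0)$, $(1,2,0)$, $(2,0,1)$, $(1,1,1)$. When no pocket is a pentagon (all cases but the first two), at most two pocket vertices are needed, and since any valid guard triple that meets the five vertices of $Q$ in three points automatically covers $Q$ (any three vertices of a pentagon cover it), one finds such a triple also covering the pockets. The awkward cases are $(3,0,0)$ and $(0,3,0)$, where the single large pocket $R$ is a pentagon whose induced triangulation is a fan from some apex $u$; if $\{v,u\}$ lies in no valid guard triple --- which is exactly when $u$ is adjacent or antipodal to $v$ in $P$ --- then $v$ itself must be abandoned. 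This is handled using that $Q$, being a pentagon with fan apex $v$, is covered by any two of its vertices at $Q$-distance $2$ except the pair consisting of $v$'s two neighbours in $Q$; running through the few positions of $u$, one finds in each a valid triple --- built from $u$, or from a vertex of $Q\cap R$, together with two such vertices of $Q$ --- that covers all of $P$. Organising these configurations into a figure like Figure~\ref{figure:heptagons} makes the checking mechanical; the degree-$4$ cases with a pentagonal pocket are where the content of the theorem lies.
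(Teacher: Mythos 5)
Your proof is correct, but it takes a genuinely different route from the paper's, most visibly in the octagon case. The paper handles $n=6,7$ by removing ears (via the two ears theorem) down to a pentagon and invoking Chv\'{a}tal's theorem to get a single guarding vertex $v$; for $n=8$ it either removes three ears, or, when the weak dual is a path, picks a triangle $T$ with a boundary edge adjacent to an ear and observes that only $2$ of the $8$ placements miss $T$ while $3$ placements pass through the pentagon's guarding vertex, so some placement does both --- a counting trick that avoids case analysis entirely. You instead use Fisk's colouring for $n\leq 6$ (your antipodal-pair argument for the hexagon is a nice self-contained alternative to ear removal), Proposition \ref{prop:heptagons} for $n=7$ as the paper also permits, and for $n=8$ a high-degree vertex, its fan, and a pocket-by-pocket analysis in the style of the heptagon proposition. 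I checked your case analysis: the companions of $v$ in a valid triple are indeed $v\pm2,v\pm3$; the $d\in\{5,6\}$ cases and all six distributions for $d=4$ do close up, including the awkward pentagonal-pocket cases with apex $u=v\pm4$, which are rescued by triples meeting $Q$ in $\{n_0,n_2\}$ or $\{n_1,n_3\}$ exactly as you indicate. Two small points deserve tightening. First, ``any three vertices of a pentagon cover it'' is true but should be justified: every point lies in some triangle of a triangulation and hence sees at least three vertices, and $3+3>5$ forces an intersection with any chosen triple. Second, a quadrilateral pocket is \emph{not} covered by either endpoint of the separating diagonal $n_in_{i+1}$, only by the one that is also an endpoint of the pocket's internal diagonal (or by the adjacent pair $\{n_i,n_{i+1}\}$ jointly); your subsequent cases survive this correction, but as stated the lemma is false. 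Your approach is longer and requires the figure-style enumeration you describe, but it is elementary and constructive; the paper's is shorter at the cost of leaning on the two ears theorem and a less transparent counting argument.
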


Before we prove Theorem \ref{theorem:main}, we summarize a few helpful results for polygons.  See \cite[\textsection 1.2]{do2011} for more details.  An \emph{ear} of a polygon $P$ is a vertex $p_i$ such that the line segment $\overline{p_{i-1}p_{i+1}}$ connecting the two adjacent vertices is contained in the interior of $P$.  If a polygon with $n$ sides has an ear $p_i$, we can construct a polygon with $n-1$ sides by removing the vertex $p_i$ and connecting $p_{i-1}$ to $p_{i+1}$.  We refer to this construction as \emph{removing} the ear $p_i$ from the polygon.  We say two ears $p_i$ and $p_j$ are \emph{non-overlapping} if the interiors of the triangles with vertices at $p_{i-1}$, $p_i$, $p_{i+1}$ and at $p_{j-1}$, $p_j$, $p_{j+1}$ do not intersect.  See Figure \ref{figure:ears} for an example.

\begin{figure}[hbt]
\begin{centering}
\includegraphics{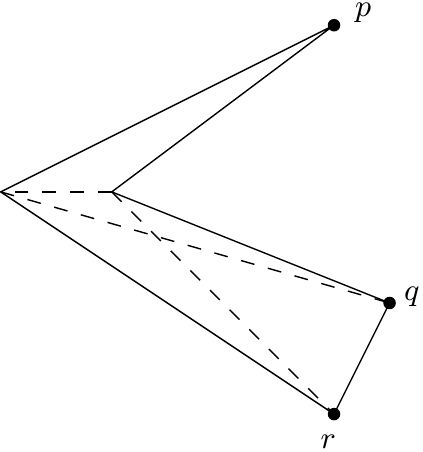}
\caption{A polygon with three ears, labelled $p$, $q$, and $r$.  The ears $p$ and $q$ are non-overlapping, as are $p$ and $r$, but $q$ and $r$ overlap}
\label{figure:ears}
\end{centering}
\end{figure}

\begin{theorem}[The two ears theorem]  If a polygon $P$ has $n\geq 4$ sides, then $P$ has at least two non-overlapping ears.  Moreover, any triangulation of $P$ reveals at least two non-overlapping ears.
\end{theorem}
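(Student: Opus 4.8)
The plan is to prove both assertions simultaneously by fixing an arbitrary triangulation of $P$ and studying its dual graph; this way the ``moreover'' clause, that \emph{every} triangulation reveals two non-overlapping ears, is obtained at the same time. I take for granted that $P$ has a triangulation, as the paper already does throughout; for an $n$-gon any such triangulation consists of exactly $n-2$ triangles joined along exactly $n-3$ diagonals, and it introduces no new vertices, so every triangle vertex is a vertex of $P$.

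First I would form the dual graph $G$: one node per triangle, with an edge joining two nodes exactly when their triangles share a diagonal. By the counts above, $G$ has $n-2$ nodes and $n-3$ edges. The key step is to show that $G$ is a tree. Connectivity is geometric: the triangles tile the connected region $P$, so any two triangles are linked by a chain of edge-adjacent triangles. A connected graph with exactly one more node than edge is a tree, and here $n-2=(n-3)+1$, so $G$ is a tree. (Equivalently, $G$ is acyclic because a dual cycle would enclose a bounded subregion of $P$, forcing some vertex into the interior of $P$, which is impossible since all vertices lie on $\partial P$.)

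Next I would invoke the elementary fact that a tree with at least two nodes has at least two leaves. Since $n\geq 4$ gives $n-2\geq 2$ triangles, $G$ has at least two leaves. I then translate each leaf into an ear. A leaf is a triangle incident to just one diagonal, so its remaining two edges are boundary edges of $P$, say $\overline{p_{i-1}p_i}$ and $\overline{p_i p_{i+1}}$; hence its vertices are exactly $p_{i-1}, p_i, p_{i+1}$ and its lone diagonal is $\overline{p_{i-1}p_{i+1}}$. Since this diagonal belongs to the triangulation it lies inside $P$, so $p_i$ is an ear by definition, and the ear triangle $p_{i-1}p_i p_{i+1}$ coincides with this triangle of the triangulation.

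Finally I would verify the non-overlapping condition: two distinct leaves give two distinct triangles of the triangulation, and triangles of a triangulation meet only along shared edges or vertices, so their interiors are disjoint. As these triangles are precisely the two ear triangles from the definition of non-overlapping, the two ears are non-overlapping, which proves the theorem together with its ``moreover'' clause. I expect the main obstacle to be pinning down that $G$ is a tree --- either the connectivity argument or, equivalently, the reason a dual cycle would illegally trap a vertex in the interior of $P$ --- since the remaining steps are routine once the dual tree is in hand.
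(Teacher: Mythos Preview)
Your proof is correct and follows essentially the same approach as the paper: triangulate, form the weak dual graph, observe that it is a tree, and read off the two ears from the (at least) two leaves. The paper gives only a two-sentence sketch of this argument, while you spell out the vertex/edge counts, why a leaf triangle is an ear triangle, and why distinct leaf triangles have disjoint interiors, but the underlying idea is identical.
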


This was first proven with the ``ear'' terminology in \cite{meisters1975}, but was also proven by Max Dehn around 1899 in an unpublished manuscript on the Jordan curve theorem, as discussed in \cite{guggenheimer1977}.  One quick proof of this result is to triangulate the polygon and look at the \emph{weak dual graph} of the triangulation, which has a vertex for each triangle and has two vertices joined if their triangles share an edge.  This graph will be a tree, and as such must have at least two leaves, which correspond to ears.

 The two ears theorem implies that if $n\geq 5$, then we can remove \emph{two} ears from an $n$-gon, resulting in a polygon with $n-2$ sides.  (If $n=4$, removing two non-overlapping ears results in a line segment.)  
  In our proof below, we will use the following observation:  if $p_i$ is an ear, then the triangle with vertices at $p_{i-1}$, $p_i$, and $p_{i+1}$ is always observed if we have placed a guard on every third vertex.  This is because at least one of $p_{i-1}$, $p_i$, and $p_{i+1}$ will be the location of a guard.  Therefore if we place guards on every third vertex of a polygon $P$, to show that they guard all of $P$ it suffices to show that they guard a subpolygon of $P$ obtained by removing ears.

\begin{proof}[Proof of Theorem \ref{theorem:main}]  Suppose $n\leq 5$.  By Chv\'{a}tal's theorem, there exists a vertex $v$ such that a single guard placed at $v$ can see the whole polygon. Placing guards at every third vertex starting at $v$ thus covers $P$.

Suppose $n=6$ or $n=7$. By the two ears theorem, we may remove ears from $P$ until it is a pentagon.  Again by Chv\'{a}tal's theorem, there exists a vertex $v$ that can see the whole pentagon.  Place a guard at every third vertex of $P$, starting with $v$.  By the argument preceding this proof, the fact that these guards cover the pentagon means that they cover all of $P$.  (Alternatively for $n=7$, this result follows from Proposition~\ref{prop:heptagons}.)

\begin{figure}[hbt]
\begin{centering}
\includegraphics{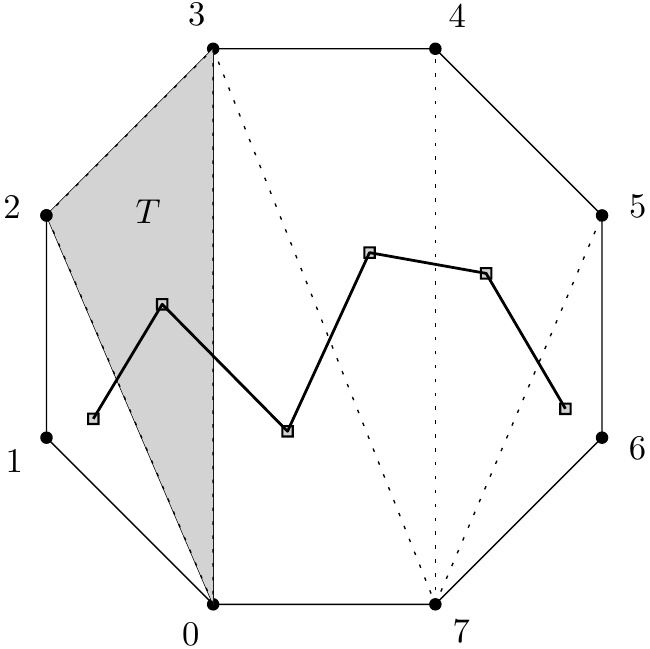}
\caption{A triangulation of an octagon, with the weak dual graph being a line segment}
\label{figure:octagon_dual}
\end{centering}
\end{figure}

Finally, suppose $n=8$.  Label the vertices of $P$ cyclically with the integers from $0$ through $7$. Consider any triangulation of $P$, and look at the weak dual graph.  If the weak dual graph has three or more leaves, then $P$ has three non-overlapping ears.  We may remove these to obtain a pentagon, and then we apply the same argument from the $n=6$ and $n=7$ cases.  Otherwise, the weak dual graph must have only two leaves (corresponding to two ears $p$ and $q$) and thus must be a line segment.  Choose a triangle $T$ corresponding to a vertex in the weak dual that is adjacent to a leaf.  Then in the triangulation, $T$ borders an ear, and has an edge on the boundary of $P$.  This is illustrated in Figure \ref{figure:octagon_dual}, with $p$ and $q$ at vertex $1$ and vertex $6$, respectively.

Up to relabelling and reorienting, the vertices of $T$ are at the vertices of $P$ labelled $0$, $2$, and $3$.  There are only two ways that placing guards at every third vertex of $P$ will fail to place a guard at some vertex of $T$:  putting the guards at $\{1,4,7\}$ and putting the guards at $\{4,1,6\}$.

Remove the two ears $p_1$ and $p_2$ from $P$ to obtain a hexagon $Q$.  Then remove the triangle $T$ from $Q$ to obtain a pentagon $R$.
  Let $v$ be a vertex such that all of $R$ is visible from $v$.  There are three ways to place the guards on every third vertex of $P$ so that a guard is on $v$:  starting at $v$ and proceeding clockwise, starting at $v$ and proceeding counterclockwise, or placing the guards so that the second guard is placed on $v$.  At least one of these ways places a guard at a vertex of $T$, since there are only two placements that fail to do so.  Thus, we may place the guards so that one is at $v$ and one is at a vertex of $T$.  These guards cover the pentagon $R$ and the triangle $T$, and thus the hexagon $Q$.  Since $Q$ was obtained from $P$ by removing ears, it follows that the guards cover $P$.
\end{proof}

\section{Open problems}
There are many generalizations of the art gallery theorem that suggest a strategy for placing guards might work, only to have it fall apart for certain counterexamples.  We may ask for \emph{minimal} counterexamples, similar to the one provided by our nonagon.

\begin{itemize}
\item  One variant of the art gallery problem is \emph{the fortress problem}:  given a polygonal fortress, place guards on its boundary so that they may observe every point \emph{exterior} to the fortress.  O'Rourke and Wood \cite[Theorem 6.1]{orourke1987} showed that for a fortress with $n$ sides, $\lceil n/2\rceil$ guards (all placed at vertices) will suffice, and are sometimes necessary.  However, the strategy of placing a guard at every second vertex does not always work:  they present a $20$-sided polygon where this strategy fails.  To this author's knowledge, it is unknown if this counterexample is minimal.

\item  Rather than guarding two-dimensional polygons, one could try to guard three-dimensional polyhedra.  However, there is a remarkable difference in this generalization:  \emph{not every polyhedron is covered by guards placed at its vertices}.  T. S. Michael \cite{michael2009} presents an example called the octoplex, which has 30 faces and 56 vertices. Placing $56$ guards, one on each vertex, fails to cover the whole polytope. It seems to be open whether there exists a polytope either with fewer vertices or with fewer faces that cannot be guarded by its vertices.
\end{itemize}

\section*{Acknowledgement}  The author would like to thank the students enrolled in Spring 2018's ``Discrete Geometry'' course at Williams College for many helpful polygonal discussions: Will Barrett, Spencer Brooks, Akhil Dayal, Max Everett, Will Howie, Will Huang, Rishad Karim, Ben Morton, Ben Mygatt, Jacob Pesikoff, and Amy Qiu.  


\begin{thebibliography}{99}

\bibitem{chvatal75}  V. Chv\'{a}tal, A combinatorial theorem in plane geometry, {J. Combinatorial Theory Ser. B.} 18 (1975) 39--41.
\label{chvatal75}

\bibitem{do2011}S. Devadoss, J. O'Rourke, {Discrete and computational geometry}, Princeton University Press, Princeton, NJ, 2011.
\label{do2011}


\bibitem{guggenheimer1977} 
H. Guggenheimer,  The Jordan curve theorem and an unpublished manuscript by Max Dehn, {Arch. Hist. Exact Sci.} 17(2) (1977) 193--200.
\label{guggenheimer1977}


\bibitem{fisk1978}
S. Fisk, A short proof of Chvátal's watchman theorem, {J. Combin. Theory Ser. B.} 24(3) (1978) 374.
\label{fisk1978}

\bibitem{meisters1975}
G. Meisters, Polygons have ears, {Amer. Math. Monthly} {82} (1975) 648--651.
\label{meisters1975}

\bibitem{michael2009}
T. S. Michael, How to Guard an Art Gallery and Other Discrete Mathematical Adventures, Johns Hopkins
University Press, Baltimore MD, 2009.
\label{michael2009}

\bibitem{orourke1987}  J. O'Rourke, {Art gallery theorems and algorithms}, International Series of Monographs on Computer Science, The Clarendon Press, Oxford University Press, New York, 1987.



\end{thebibliography}
\end{document}